\newcommand{\ZZ}{{\mathbb Z}}
\newcommand{\CC}{{\mathbb C}}
\newcommand{\RR}{{\mathbb R}}
\newcommand{\TT}{{\mathbb T}}
\renewcommand{\S}{{\mathcal S}}
\renewcommand{\P}{{\mathcal P}}
\renewcommand{\L}{{\mathcal L}}
\newcommand{\R}{{\mathcal R}}
\newcommand{\Q}{{\mathcal Q}}
\newcommand{\Ve}{\text{Vert}}
\newcommand{\Ed}{\text{Edge}}
\newcommand{\oS}{\overset{\circ}S}
\newcommand{\oC}{\overset{\circ}C}
\newcommand{\of}{\overset{\circ} f}
\newcommand{\oh}{\overset{\circ} h}
\newtheorem{thm}{Theorem}[section]
\newtheorem{defi}[thm]{Definition}
\newtheorem{prop}[thm]{Proposition}
\newtheorem{lemma}[thm]{Lemma}
\theoremstyle{definition}
\theoremstyle{definition}
\newtheorem{exa}[thm]{Example}}
\begin{document}
\title{Tropical Open Hurwitz  numbers}
\date{\today}
\author{Benoît Bertrand}
\address{I.U.T de Tarbes, Université Paul Sabatier, Institut Mathématiques de Toulouse, 
118 route de Narbonne, F-31062 Toulouse Cedex 9, France}
\email{benoit.bertrand@math.univ-toulouse.fr}
\author{Erwan Brugallé}
\address{Université Pierre et Marie Curie,  Paris 6, 4 place Jussieu, 75 005 Paris, France} 
\email{brugalle@math.jussieu.fr}
\author{Grigory Mikhalkin}
\address{Section de mathématiques
Université de Genève,
Villa Battelle, 7 route de Drize,
1227 Carouge, Suisse} 
\email{grigory.mikhalkin@unige.ch}
\subjclass[2000]{Primary 14N10, 14T05}
\keywords{Tropical geometry, Hurwitz numbers}

\begin{abstract}
We give a tropical interpretation of Hurwitz
numbers extending the one discovered in \cite{CJM}.
In addition we treat a generalization of Hurwitz numbers for
surfaces with boundary which we call open Hurwitz
numbers.\thanks{Research is supported in part by the project TROPGEO
  of the 
  European Research Council. 
Also B.B. is partially supported by the ANR-09-BLAN-0039-01, E.B. is
partially supported by the ANR-09-BLAN-0039-01 and ANR-09-JCJC-0097-01,  and
G.M. is partially supported by the Swiss National Science Foundation 
grants n° 125070 and 126817.}  
\end{abstract}

\maketitle

Hurwitz numbers are defined as the (weighted) number of ramified
coverings of a compact closed oriented surface $S$ of a given genus
having a given set of critical values with
 given ramification profiles. These numbers have a long history, and
have  connections to many areas of mathematics, among which we can mention
 algebraic geometry, topology, combinatorics,
and representation theory (see 
  \cite{ZvLa} for example).

Here we define a slight generalization of these numbers that we call
\textit{open Hurwitz numbers}. To do so, we fix not only points on $S$ and
ramification profiles, but  also a collection of disjoint
circles on $S$ and  the behavior of the coverings 
above each of these circles.
Note that the total space of the ramified coverings we consider now is allowed to have boundary components.

We also define tropical open Hurwitz numbers, and establish a
correspondence with their complex counterpart.
This can simply be seen as a translation in the tropical language
 of the computation
of open Hurwitz numbers
 by cutting $S$ along a collection of 
circles. 
A decomposition of  $S$ into pairs of pants 
reduces the problem to the enumeration of ramified coverings of
the sphere $S^2$ with 3 critical values. In the particular case where
all ramification points are simple, except maybe two of them, 
we recover the
tropical computation of double Hurwitz numbers in \cite{CJM}.

\vspace{1ex}
This note is motivated by the forthcoming paper \cite{Br9} where the
computation of
 genus 0 characteristic
numbers of $\CC P^2$ is reduced to  
enumeration of floor diagrams and computation of genus 0 open Hurwitz numbers.

\vspace{1ex}
We would like to 
thank Arne Buchholz and 
Hannah Markwig who pointed out an inaccuracy in the first version of the
discussion at
the end of the paper.

\section{Open Hurwitz numbers}\label{classical}

The data we need to define open Hurwitz numbers are
\begin{itemize}
\item $S$ an oriented connected closed compact surface;

\item $\L$  a finite
collection of disjoint smoothly embedded 
circles 
 in $S$; we denote by $\oS$ the surface $S\setminus
 \big(\bigcup_{L\in \L}
 L\big)$;

\item  $\P$ be a finite collection of points in $\oS$;

\item a number $\delta(S')\in\ZZ_{\ge 0}$ associated to each connected
  component $S'$ of $\oS$;  to each circle $L\in\L$ which is in the closure of the connected
  components $S'$ and $S''$ of $\oS$ (note that we may have $S'=S''$),
 we associate the number
  $\gamma(L)=|\delta(S')-\delta(S'')|$; 

\item a partition $\mu(p)$ of $\delta(S')$ associated to each point
  $p\in\P$, 
  where $S'$ is the connected component of $\oS$ containing $p$;

\item a partition $\mu(L)$ of $\gamma(L)$ associated to each circle
  $L\in\L$.

\end{itemize}


\vspace{2ex}

In this note we identify two continuous maps   $f : S_1\to S$ and $f':S_1'
\to S$ if  there exists
%
a homeomorphism  $\Phi:S_1\to S_1'$ such that  $f'\circ \Phi= f$.

Now let us denote by $\S$ the set of all
(equivalence class of) ramified coverings $f:S_1\to
S$ where

\begin{itemize}
\item $S_1$ is a  connected compact oriented surface with boundary;

\item $f(\partial S_1)\subset \cup_{L\in\L}L$;

\item $f$ is unramified over $S\setminus \P$;
 
\item  $f_{|f^{-1}(S')}$ has degree $\delta(S')$ for each connected
  component $S'$ of $\oS$;

\item  for each point $p\in\P$, if
  $\mu(p)=(\lambda_1,\ldots,\lambda_k)$, then $f^{-1}(p)$ contains
  exactly $k$ points, denoted by
  $q_1,\ldots,q_k$, and $f$ has ramification index 
$\lambda_i$ at $q_i$;

\item for each circle $L\in\L$, if
  $\mu(L)=(\lambda_1,\ldots,\lambda_k)$, then $f^{-1}(L)$ contains
  exactly $k$ boundary components of $S_1$, denoted by
  $c_1,\ldots,c_k$, and $f_{|c_i}:c_i\to L$ is an
 unramified covering of degree $\lambda_i$.

\end{itemize}

Note that the Riemann-Hurwitz formula gives us
$$\chi(S_1)=\sum_{S'}\delta(S')\left(\chi(S')- |\P\cap S'|  \right)  +
\sum_{p\in\P} l(\mu(p))$$
where $ l(\mu(p))$ is the length of the partition $\mu(p)$ (i.e. its cardinality as a multi-set of natural numbers).

\begin{defi}
The open Hurwitz number $H_S^\delta(\L,\P,\mu)$ is defined as
$$H_S^\delta(\L,\P,\mu)=\sum_{f\in  \S}\frac{1}{|Aut(f)|}  $$
where $Aut(f)$ is the set of automorphisms of $f$.
\end{defi}

\begin{figure}[h]
\centering
\begin{tabular}{ccc}
\includegraphics[width=4cm, angle=0]{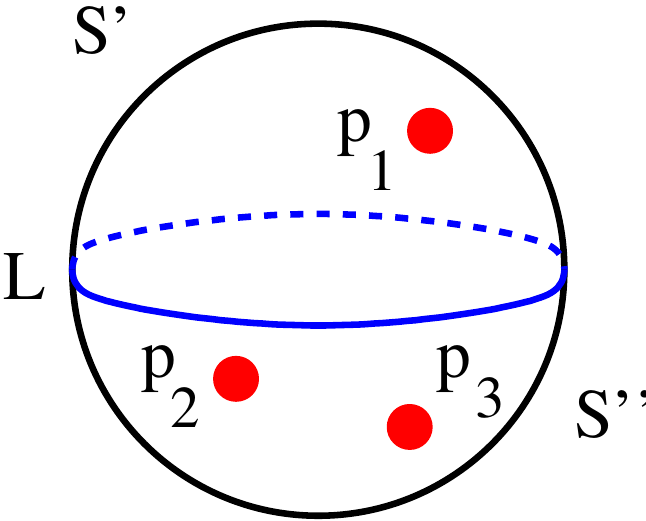}& \hspace{5ex} &
\includegraphics[width=4cm, angle=0]{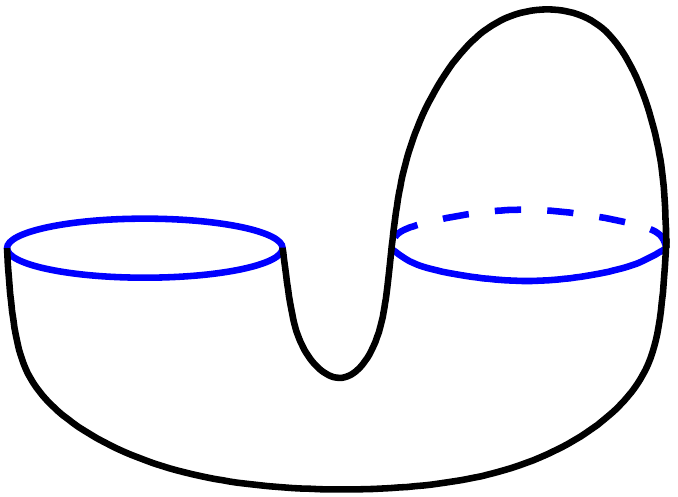}
\\
\\ a)  && b)  
\end{tabular}
\caption{}
\label{sphere}
\end{figure}

\begin{exa}\label{simple computation}
Let $S$ be the sphere, $L$ be a circle in $S$, and $p_1,p_2,$ and
$p_3$ three points distributed in $S$ as depicted in figure
\ref{sphere}a. Let us also denote by $S'$ and $S''$ the two connected
components of $S\setminus L$ according to figure \ref{sphere}a.
We define $\mu(p_1)=\mu(p_2)=\mu(p_3)=(2)$. 
The table below lists some values of $H_S^\delta(\L,\P,\mu)$ easily
computable by hand.
Figure \ref{sphere}b depicts the only map to be 
 taken into account
 in the second row of the table.
$$
\begin{array}{c|c|c|c|c|c||c}
\delta(S) & \delta(S') & \delta(S'') & \L & \mu(\L) & \P &  H_S^\delta(\L,\P,\mu)
\\\hline 2 & & & \emptyset & &\{p_2,p_3\} & \frac{1}{2}
\\\hline  &1 &2 & \{L\} & (1)&\{p_2,p_3\} & 1
\\\hline  &1 &2 &  \{L\}  & (1)   &\{p_1,p_3\} & 0
\\\hline  &0 &2 &  \{L\}  & (1,1) &\{p_2,p_3\} & \frac{1}{2}
\end{array}
$$
\end{exa}

\begin{exa}\label{simplify}
If $\mu(p)=(1,\ldots,1)$, then it is clear that 
$H_S^\delta(\L,\P,\mu)=H_S^\delta(\L,\P',\mu)$ where $\P'=\P\setminus
\{p\}$. 
\end{exa}

\begin{exa}\label{L to P}
Suppose that $L\in\L$ bounds  a disk $D$ 
 which contains no point of
$\P$, and that 
$\mu(L)=(\lambda_1,\ldots,\lambda_k)$ with $\lambda_i\ne 1$.
 Choose a point
 $p\in D$, 
define  
$\L'=\L\setminus
 \{L\}$,  $\P'=\P\cup \{p\}$, and 
extend $\mu$ at the point $p$ by
$\mu(p)=(1,\ldots,
 1,\lambda_1,\ldots,\lambda_k)$
where the number of 1 we add is equal to $\delta(D)$. Then 
 $H_S^\delta(\L,\P,\mu)=H_S^{\delta'}(\L',\P',\mu)$.
 Here $\delta'$ is obtained from $\delta$ by increasing it by
 $\gamma(L)$ over $D$. 

We have to assume that 
$\lambda_i\ne 1$ 
 for all $i$ in $\{1, \dotsc, k\}$
to get this identity. Otherwise,
new automorphisms of coverings might appear (e.g. the first two rows of
example \ref{simple computation}). 
\end{exa}

Note that 
the open Hurwitz number $H_S^\delta(\L,\P,\mu)$ is a topological
invariant that depend only on 
the topological type of the triple $(S,\oS,\P)$, 
and the functions $\delta$ and $\mu$.

\vspace{1ex}
In the special case where $\L$ is empty,
  we recover the usual Hurwitz numbers. In particular $\delta$ is
 just a positive integer number, the degree of the maps  we are
 counting and that  we denote by $d$. We simply denote Hurwitz numbers 
by $H_S^d(\P,\mu)$.

The problem of computing $H_S^d(\P,\mu)$ 
is equivalent to counting the number of some group morphisms from
the fundamental group of a punctured surface  to the
symmetric group $\mathfrak S_d$. Hence, Hurwitz numbers are theoretically
computed by Frobenius's Formula (see for example \cite[Appendix,
  Theorems A.1.9 and A.1.10]{ZvLa}).

\begin{exa}\label{elem1}
If $\P=\{p_1,p_2\}$ is a set of two points on the sphere $S^2$ with
$\mu(p_1)=\mu(p_2)=(d)$, then
$$H_{S^2}^d(\P,\mu)=\frac{1}{d}$$
\end{exa}

\begin{exa}\label{elem2}
If $\P=\{p_1,p_2,p_3\}$ is a set of three points on the sphere $S^2$ with
$\mu(p_1)=(\lambda_1,\lambda_2)$, $\mu(p_2)=(d)$, and
$\mu(p_3)=(2,1,\ldots,1)$, then
$$H_{S^2}^d(\P,\mu)=\frac{1}{|Aut(\mu(p_1))|}$$
\end{exa}

To end this section, let us mention the following nice
closed formula due to Hurwitz.

\begin{prop}[Hurwitz]
If  $\mu(p)=(2,1,\ldots,1)$ for all $p$ in $\P$ except for one point
$p_0$ for which we have
$\mu(p_0)=(\lambda_1,\ldots,\lambda_k)$,
then
$$H_{S^2}^d(\P,\mu)=\frac{d^{k-3}(d+k-2)!}{|Aut(\mu(p_0))|}\prod_{i=1}^k
\frac{\lambda_i^{\lambda_i}}{\lambda_i!}  $$
\end{prop}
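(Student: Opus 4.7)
The plan is to translate the ramified-cover enumeration into a count of transitive factorizations in the symmetric group $\mathfrak S_d$ via monodromy, and then to carry out that combinatorial count by the cut-and-join recursion.

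Fix a basepoint $* \in S^2 \setminus \P$. Each cover $f : S_1 \to S^2$ of the problem is determined, up to conjugation in $\mathfrak S_d$, by its monodromy $\rho_f : \pi_1(S^2 \setminus \P, *) \to \mathfrak S_d$; the conjugation stabilizer of $\rho_f$ equals $\mathrm{Aut}(f)$, and $\rho_f$ has transitive image iff $S_1$ is connected. The fundamental group $\pi_1(S^2 \setminus \P, *)$ is presented by generators $\gamma_0$ (loop around $p_0$) and $\gamma_1,\ldots,\gamma_m$ (loops around the simple branch points), subject to the single relation $\gamma_0\gamma_1\cdots\gamma_m=1$. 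Replacing the sum over conjugacy classes of $\rho_f$ by a sum over the $\rho_f$ themselves introduces a factor $1/d!$, yielding
$$H^d_{S^2}(\P,\mu) \;=\; \frac{N_{\mathrm{tr}}}{d!},$$
where $N_{\mathrm{tr}}$ counts tuples $(\sigma_0,\tau_1,\ldots,\tau_m)\in\mathfrak S_d^{m+1}$ with $\sigma_0$ of cycle type $\mu(p_0)$, each $\tau_i$ a transposition, satisfying $\sigma_0\tau_1\cdots\tau_m=1$ and generating a transitive subgroup. The factor $(d+k-2)!$ in the claimed formula corresponds, via Riemann--Hurwitz, to taking $m = d+k-2$ (the genus-$0$ case); for any other $m$ the Hurwitz number vanishes.

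Computing $N_{\mathrm{tr}}$ is the main obstacle, and I would handle it by the cut-and-join recursion on $k$. Moving the simple branch point $p_m$ along a path toward $p_0$ and letting them collide absorbs the transposition $\tau_m$ into $\sigma_0$, producing a cover with monodromy $\sigma_0\tau_m^{-1}$ at the combined point. The cycle type $\mu'$ of $\sigma_0\tau_m^{-1}$ is obtained from $\mu$ either by \emph{splitting} one part $\lambda_i$ into two (when the indices moved by $\tau_m$ lie in the same cycle of $\sigma_0$), or by \emph{joining} two parts $\lambda_i,\lambda_j$ into the single part $\lambda_i+\lambda_j$ (when they lie in different cycles). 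Counting the transpositions $\tau_m$ that realize each $\mu'$ produces explicit coefficients and yields a linear recursion expressing $H^d_{S^2}(\P,\mu)$ in terms of Hurwitz numbers for partitions with $k\pm 1$ parts and one fewer simple branch point.

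The remaining work is to verify that the proposed closed form satisfies this recursion, which reduces to a polynomial identity among the expressions $d^{k-3}(d+k-2)!\prod_i \lambda_i^{\lambda_i}/\lambda_i!$ and their analogues for the partitions $\mu'$ produced by cut or join. The base case is $k=1$, $\mu=(d)$: by D\'enes' formula, a fixed $d$-cycle admits $d^{d-2}$ minimal factorizations into transpositions, so $N_{\mathrm{tr}}=(d-1)!\cdot d^{d-2}$ and $H^d_{S^2}(\P,\mu)=d^{d-3}$, matching the claimed formula. Combined with the recursion, this completes the proof.
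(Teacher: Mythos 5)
The paper does not prove this proposition: it is quoted as a classical closed formula due to Hurwitz, so there is no internal argument to compare yours against, and I can only judge the proposal on its own terms. Your monodromy translation $H^d_{S^2}(\P,\mu)=N_{\mathrm{tr}}/d!$ is correct, and so is the base case via D\'enes' formula. Note, however, that the statement implicitly assumes the number of simple branch points is exactly $d+k-2$, so that the covering surface is a sphere; your parenthetical claim that the Hurwitz number vanishes for any other $m$ is false (for $m>d+k-2$ of the right parity one obtains nonzero counts of higher-genus covers), though this does not affect the case actually being proved.

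The genuine gap is in the cut-and-join step, in two respects. First, the recursion you describe is not linear. In the ``cut'' case --- when the two indices moved by the absorbed transposition lie in the same cycle of $\sigma_0$ --- the remaining generators need not act transitively, so the cover with one fewer branch point may be disconnected; in genus $0$ it always is, since a connected cover with $k+1$ parts over $p_0$ and only $d+k-3$ simple branch points would have negative genus by Riemann--Hurwitz. The cut contribution is therefore a sum of products $H^{d_1}_{S^2}(\cdot)\,H^{d_2}_{S^2}(\cdot)$ over all ways of splitting the partition, the degree, and the remaining simple branch points between two components, so the recursion is quadratic. Second, and more importantly, verifying that the proposed closed form satisfies this quadratic recursion is the actual content of Hurwitz's theorem: it reduces to a nontrivial Abel-type binomial identity, and this is exactly the step you defer with ``the remaining work is to verify\dots''. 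As written, the proposal is a reasonable strategy outline (essentially Hurwitz's original approach, later made rigorous by Goulden and Jackson), but the decisive computation is missing, so it is not yet a proof.
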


\section{Tropical open Hurwitz numbers}

\subsection{Tropical curves with boundary}\label{defi trop curve}

Given a finite graph $C$ (i.e. $C$ has a finite number of edges and
vertices) we denote 
by $\Ve(C)$ the set of its vertices,
 by $\Ve^0(C)$ the set of its vertices which are not $1$-valent, 
 and by 
$\Ed(C)$ the set of its edges.

\begin{defi}
An irreducible  tropical curve $C$ with boundary is a connected 
  finite graph
with $\Ed(C)\ne\emptyset$
 such that 
\begin{itemize}
\item  $C\setminus  \Ve^\infty(C)$ is a complete metric graph for some
 set of 1-valent vertices  $\Ve^\infty(C)$ of $C$;

\item the vertices of $\Ve^0(C)$ have non-negative integer weights,
 i.e. 
$C$ is equipped  with a map
$$\begin{array}{ccc}
\Ve^0(C) &\longrightarrow & \ZZ_{\ge 0}
\\ v&\longmapsto & g_v
\end{array} $$

\item any $2$-valent vertex $v$ of $C$ satisfies $g_v\ge 1$.

\end{itemize} 

If  $v$ is an element of $\Ve^0(C)$,
 the integer $g_v$ is called the genus of $v$.
The genus of $C$  is defined as  
$$g(C)=b_1(C) + \sum_{v\in\Ve^0(C)}g_v$$
where $b_1(C)$ denotes the first Betti number of $C$. When
$g(C)=b_1(C)$, we say that the curve $C$ is explicit.

\vspace{1ex}
A boundary component of $C$ is a $1$-valent vertex which not in
$\Ve^\infty(C)$.
An element of $\Ve^\infty$ is called a leaf of $C$, and its adjacent edge is
called an end of $C$.
\end{defi}
By definition,  
the
 leaves of $C$ are 
at 
infinite distance from all the other
points of $C$. A tropical curve without any boundary component is said to be
\textit{closed}. Our definition of tropical curves with boundary extends the
definition of tropical curves with stops introduced by Nishinou in \cite{Nish1}.
We denote 
 by $\partial C$ the set of the boundary components of $C$, 
and by 
 $\Ed^{0}(C)$ the set of its edges which are not adjacent to a
1-valent vertex.

Since $O_1(\RR)=GL_1(\ZZ)$, 
 the data of the metric on $C$ is equivalent to the data of a
$\ZZ$-affine structure on each  edge of $C$, i.e. the data of a
 lattice $M_p\simeq\ZZ$ in each tangent line of such  an edge at
 the point $p$. 
In order to avoid unnecessary formal complications, we treat points on
edges of a tropical curve as 2-valent vertices of genus 0 
in the next definition.

\begin{defi}
A continuous map  $h : C_1\to C$ is a
(non-proper) tropical morphism between the two tropical curves $C_1$ and $C$
if
\begin{itemize}

\item   $h^{-1}(\partial C)\subset \partial C_1$;

\item for any edge $e$ of $C_1$, the set $h(e)$ is contained either in
  an edge of $C$ or in a
  vertex in $\Ve^0(C)$; moreover
 the restriction $h_{|e}$ is a dilatation by some
  integer $w_{h,e}\ge 0$ (i.e. $dh_p(M_p)=w_{h,e}M_{h(p)}$
  for any $p\in e$ in the first case, 
and $w_{h,e}$ is obviously  0 in the second case);

\item for any vertex $v$ in $\Ve^0(C_1)$, if we denote by 
  $e_1,\ldots,e_k$ the edges of $C$
  adjacent to 

  $h(v)$, and by  $e'_{i,1},\ldots,e'_{i,l_i}$ the 
edges of $C_1$ adjacent to $v$ such that $h(e'_{i,j})\subset e_i$,
  then one has the balancing condition
\begin{equation}\label{balancing}
\forall i,j, \quad \sum_{l=1}^{l_i}w_{h,e'_{i,l}}=
  \sum_{l=1}^{l_j}w_{h,e'_{j,l}}
\end{equation}
This number is called the local degree of
$h$ at $v$, and is denoted by $d_{h,v}$;

\item for any vertex $v$ in $\Ve^0(C_1)$,
  if $l$ (resp. $k$) denotes the number of edges $e$ of $C$
  (resp. of $C_1$ with $w_{f,e}>0$) adjacent to $h(v)$ (resp. to $v$) and $k>0$ then 
 one has the
  Riemann-Hurwitz condition
\begin{equation}\label{RH condition}
 k- d_{h,v}(2g_{h(v)} +l -2) +2g_v -2 \ge 0
\end{equation}
This number is 
denoted by $r_{h,v}$.
\end{itemize}

The morphism $h$ is called proper if $h^{-1}(\partial C)= \partial C_1$.

\end{defi}
Note that the definition implies that $h^{-1}(\Ve^\infty(C))\subset
\Ve^\infty(C_1)$ for a tropical morphism $h:C_1\to C$.
The Riemann-Hurwitz condition in the previous definition comes from
the classical Riemann-Hurwitz Theorem: if $S_1$ is a genus $g_v$
oriented surface with $k$ punctures, $S$ is a genus $g_{h(v)}$
oriented surface with $l$ punctures, 
and $f:S_1\to
S $ is a ramified covering 
  of degree $d_{h,v}$, then the left hand side of inequality (\ref{RH
    condition}) is the sum of the ramification index of all points of
  $S_1$. In particular, it is non-negative.

The integer $w_{h,e}$
 is
called the \textit{weight of the edge $e$ with respect to $h$}.  When
no confusion is possible, we will  speak 
about the weight of an edge, without referring to the morphism $h$.
If 
$w_{h,e}=0$, we say that the morphism $h$ \textit{contracts} the edge
$e$. 
The morphism $h$ is called
\textit{minimal} if
$h^{-1}(\Ve^\infty(C))= \Ve^\infty(C_1)$, i.e. $h$ does not contract
any end.

Two tropical morphisms $h:C_1\to C$ and $h':C_2\to C$ are said to be of
the same \textit{combinatorial type} if there exists a homeomorphism
of graphs $\phi:C_1\to C_2$  (i.e. we forget
about the metric on $C_1$ 
and $C_2$) such that $h=h'\circ \phi$,
$g(v)=g(\phi(v))$ for any vertex $v$ of $C_1$ 
 and 
$w_{h,e}=w_{h', \phi(e)}$ for all $e\in\Ed(C_1)$.

\begin{exa}\label{Example2}
We depicted in  figure \ref{ex morphism}a
 a tropical morphism form a
rational tropical curve with five leaves and one boundary component
to a rational curve with four ends. Three edges have weight 2 with
respect to $h$. 
In the picture of a tropical morphism $h:C_1\to C$, we do not precise the
lengths of edges of $C_1$ and $C$ since the length of edges
of $C$ and the weights of edges of $C_1$ determine the length of edges
of $C_1$.
\begin{figure}[h]
\centering
\begin{tabular}{cc}
\includegraphics[width=4cm, angle=0]{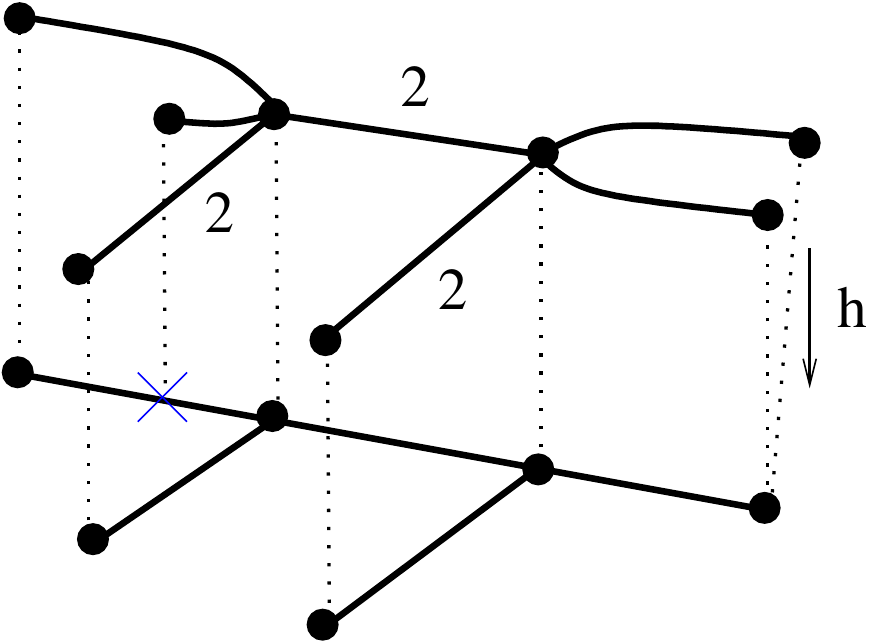}
&\includegraphics[width=4cm, angle=0]{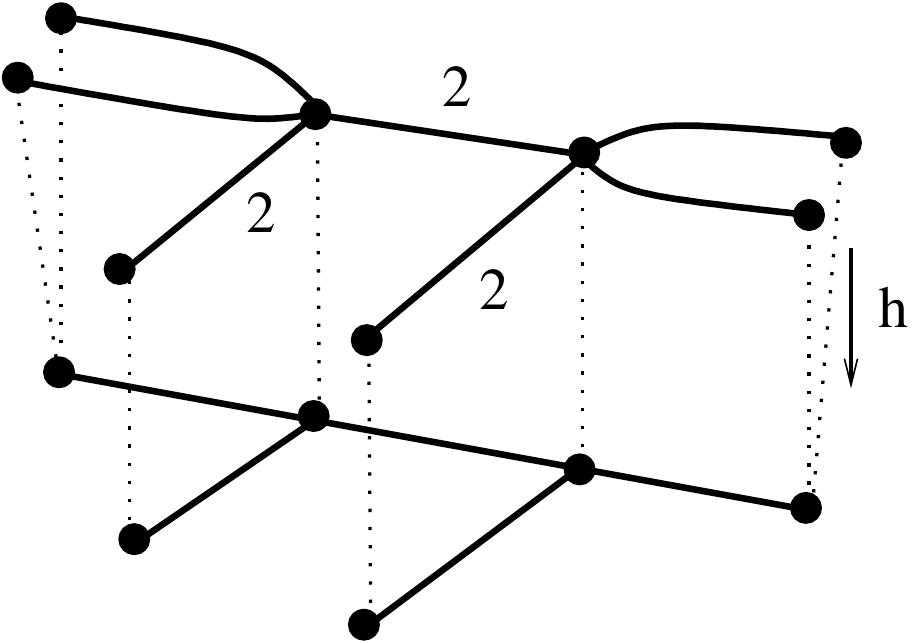}\\
a) & b)
\end{tabular}
\caption{Representation of two 
tropical 
morphisms.}
\label{ex morphism}
\end{figure}

\end{exa}

The sum of all local degrees of elements in $h^{-1}(v)$ is a locally
constant function on $C\setminus h(\partial C_1)$; if $C'$ is a
connected component of $C\setminus h(\partial C_1)$ then this sum
over a point of $C'$ is called the \textit{degree} of $h$ over $C'$.

Note that for the morphism from Figure \ref{ex morphism}~a) we  have two connected components of 
$C\setminus h(\partial C_1)$ as the boundary of $C_1$ consists of a single point (the only 1-valent
vertex whose image is inside an edge of $C$). The degree over the components of $C\setminus h(\partial C_1)$
are 1 and 2.

\begin{defi}
Let $h : C_1\to C$  be a tropical morphism.

A  subset $E$ of $C_1$ such that  $h(E)$ is a point of $C$ 
is called a ramification component of
$h$ if 
 $E$ is a connected component of $h^{-1}(h(E))$, and 
contains either an edge in $\Ed^0(C_1)$, or a vertex $v\in\Ve^0(C_1)$
with  $r_{h,v}>0$,
or a vertex $v\in\Ve^\infty(C_1)$
adjacent to an end $e$ with   $w_{h,e}>1$ (note that in such case $h(e)$ cannot
be a point as otherwise $w_{h,e}=0$).

If $p\in C$ is such that $h^{-1}(p)$ does not contain any ramification
component of $h$, we say that $h$ is unramified over $p$.

Let
$\nu=(\lambda_1,\ldots,\lambda_l)$ be an unordered $l$-tuple of
positive integer numbers.  
We say that the map $h$ has ramification
profile $\nu$ over the leaf $v$ of $C$ if $h^{-1}(v)=\{v_1,\ldots,v_l\}$
where $v_i$ is a leaf of $C_1$ adjacent to an end of weight $\lambda_i$.
\end{defi}

As in section \ref{classical}, we identify two tropical morphisms   
$h :C_1\to C$ and $h':C_1':\to C$ 
 if  there exist
two tropical isomorphisms $\Phi:C_1\to C_1'$ and 
  $\phi:C\to C$ such that
$\phi$ restricts to 
 the identity map on $\Q$ and $\R$, and $h'\circ \Phi=\phi\circ h$. 
An
automorphism $\phi$ of  a tropical morphism  $h : C_1\to C$ is
a tropical
isomorphism $\phi:C_1\to C_1$ such that
$h\circ\phi=h$.

\subsection{Definition of tropical open Hurwitz numbers}

Similarly to section  \ref{classical}, we start with the following data

\begin{itemize}
\item $C$ a closed explicit tropical curve with $\Ve^0(C)\ne \emptyset$;

\item $\R$  a finite
collection of points in $C\setminus\Ve(C)$ 
 such that any connected component of the set $C\setminus\R$, denoted
 $\oC$, contains a vertex of $C$;

\item  $\Q$ be a finite collection of points in $\Ve^{\infty}(C)$;

\item a number $\delta(C')\in\ZZ_{\ge 0}$ associated to each connected
  component $C'$ of $\oC$;  to each point $q\in\R$
 which is in the closure of the connected
  components $C'$ and $C''$ of $\oC$,
 we associate the number
  $\gamma(q)=|\delta(C')-\delta(C'')|$; 

\item a partition $\nu(q)$ of $\delta(C')$ associated to each point
  $q\in\Q$, 
  where $C'$ is the connected component of $\oC$ containing $q$;

\item a partition $\nu(q)$ of $\gamma(q)$ associated to each point
  $q\in\R$.

\end{itemize}

\vspace{2ex}

We denote by $\S^\TT$ the set of all minimal tropical morphisms $h:C_1\to
C$ such that
\begin{itemize}
\item $C_1$ is a tropical curve with boundary;

\item $h(\partial C_1)\subset \R$;

\item $h$ is unramified over $C\setminus \Q$;
 
\item $h_{|h^{-1}(C')}$ has degree $\delta(C')$ for each connected
  component $C'$ of $\oC$;

\item  for each point $q\in\Q$, the map $h$ has ramification profile $\nu(q)$
  over $q$;

\item for each point $q\in\R$, if
  $\nu(q)=(\lambda_1,\ldots,\lambda_k)$, the set $h^{-1}(q)$ contains
  exactly $k$ boundary components of $C_1$, denoted by
  $c_1,\ldots,c_k$, and $c_i$ is adjacent to an edge of $C_1$ of
  weight $ \lambda_i$.

\end{itemize}

Note that the fact that $h$ is minimal and unramified over $C\setminus
\Q$ implies that
$h(\Ve^0(C_1))\subset\Ve^0(C)$ and that $h$ does not contract any
edge of $C_1$. In particular the set $ \S^\TT$ is
finite. Moreover the length of edges of $C_1$ are completely determined by
the combinatorial type of $h$. In other words,
the following lemma 
holds. 
 
\begin{lemma}\label{unique}
Two distinct elements of $ \S^\TT$ have distinct combinatorial types.
\end{lemma}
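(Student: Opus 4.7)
The plan is to show that the combinatorial type of a morphism in $\S^\TT$ completely determines the metric on the source, so that two morphisms of the same combinatorial type tautologically represent the same element of $\S^\TT$. Concretely, given $h : C_1 \to C$ and $h' : C_1'\to C$ in $\S^\TT$ together with a weight- and genus-preserving graph homeomorphism $\phi : C_1 \to C_1'$ satisfying $h = h'\circ \phi$, I would upgrade $\phi$ to a tropical isomorphism; combined with the identity map on $C$ (which trivially fixes $\Q$ and $\R$), this exhibits $h$ and $h'$ as the same element of $\S^\TT$.

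The key step is to verify that for every $h\in \S^\TT$, each edge length of $C_1$ is a function of the underlying weighted combinatorial map. By the observation made immediately before the lemma, minimality together with unramifiedness over $C\setminus \Q$ gives $h(\Ve^0(C_1))\subset \Ve^0(C)$ and rules out any contracted edge, so for every $e'\in\Ed(C_1)$ the restriction $h_{|e'}$ is an honest affine dilation of slope $w_{h,e'}\ge 1$ into a well-defined edge $e$ of $C$. I would then split on the three possible endpoint types of $e'$: (i) if both endpoints lie in $\Ve^0(C_1)$, their images are vertices of $C$ sitting inside a single edge $e$, which forces $h(e') = e$ and $\ell(e') = \ell(e)/w_{h,e'}$; (ii) if one endpoint lies in $\Ve^\infty(C_1)$, minimality sends it to a leaf of $C$ and $\ell(e')=\infty$ is recorded by the combinatorial type; (iii) if one endpoint is a boundary component, its image is a prescribed point $q\in\R$ sitting in the interior of a definite edge of $C$ at a definite distance $d$ from the image $h(v)$ of the other endpoint $v$, giving $\ell(e') = d/w_{h,e'}$.

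In each case the length of $e'$ depends only on the combinatorial data. Applying this to both $h$ and $h'$ and using that $\phi$ preserves the underlying weighted map (so that $w_{h,e'}=w_{h',\phi(e')}$ and the images $h(e')$ and $h'(\phi(e'))$ live in the same edge of $C$), I obtain that $\phi$ sends each edge of $C_1$ to an edge of $C_1'$ of the same length. Hence $\phi$ is an isometry of metric graphs, so a tropical isomorphism, and the identification $h=h'\circ\phi$ concludes.

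The only nontrivial point is case (i), where I need $h(e') = e$ and not merely $h(e')\subset e$. This uses that $h$ contracts no edge, so $h(e')$ is a nondegenerate segment in $e$, combined with the fact that both of its endpoints are vertices of $C$; inside a single edge, the only segment whose endpoints are both vertices of $C$ is the whole edge. Everything else is routine bookkeeping about the three edge types and the fixed length data supplied by the positions of $\Q$ and $\R$ in $C$.
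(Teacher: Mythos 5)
Your proof is correct and follows exactly the route the paper takes: the lemma is presented there as an immediate consequence of the observation that minimality plus unramifiedness over $C\setminus\Q$ forces $h(\Ve^0(C_1))\subset\Ve^0(C)$ and forbids contracted edges, so that every edge length of $C_1$ is determined by the combinatorial type. Your case analysis (whole edge, end, or segment ending at a point of $\R$) simply spells out the detail the paper leaves implicit.
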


As usual in tropical geometry, 
a tropical morphism
 $h:C_1\to C$   in $\S^\TT$ should be counted with some multiplicity.
Given
 $v$ a vertex in $\Ve^0(C_1)$ such that $h(v)$ 
is adjacent to the edges $e_1,\ldots, e_{k_v}$ of $C$, we choose a
configuration $\P'=\{p'_1,\ldots, p'_{k_v}\}$ of $k_v$ points on the sphere
$S^2$, and we define $\mu'(p'_i)$ as the 
partition of $d_{h,v}$ defined by $h$ at $v$ above the edge $e_i$
 (cf the balancing condition (\ref{balancing})). 

\begin{defi}\label{def-multiplicity}
The multiplicity of  $h:C_1\to C$  is
defined as
$$m(h)=\frac{1}{|Aut(h)|}\prod_{e\in\Ed^0(C_1)}w_{h,e}
\prod_{v\in\mathcal\Ve^0(C_1)}\  \left(\prod_{i=1}^{k_v} |Aut(\mu'(p'_i)|\right) \ 
H_{S^2}^{d_{h,v}}(\P',\mu')$$ 

The tropical open Hurwitz number $\TT H_C^\delta(\R,\Q,\nu)$ is defined as
$$\TT H_C^\delta(\R,\Q,\nu)=\sum_{h\in\S^\TT}m(h)$$ 
\end{defi}

As in section \ref{classical}, if $\R=\emptyset$ then $\delta$ is a
number denoted by $d$, and we denote by $\TT H_C^d(\Q,\nu)$ the
corresponding tropical (closed) Hurwitz number.

\begin{exa}\label{deg2-non-proper}
Let $h:C_1\to C$ be the tropical morphism depicted in figure 
\ref{ex morphism}a.
 It is the tropical analog of the map considered in figure~\ref{sphere}.
%
 Let $q_1$ be the image of the boundary component of $C_1$,
and $q_2$ and $q_3$ be the leaves of $C$ which are image of a leaf of
$C_1$ adjacent to an edge of weight 2. We denote by $C'$ (resp. $C''$)
the connected component of $C\setminus\{q_1\}$ which does not contain
(resp. contains) $q_2$ and $q_3$, and we define $\delta(C')=1$,
$\delta(C'')=2$, $\nu(q_1)=(1)$, and $\nu(q_2)=\nu(q_3)=(2)$.
 To compute $\TT H_C^\delta(\R,\Q,\nu)$, the morphism of figure~\ref{ex
   morphism}a is the only one to consider and it has multiplicity $1$
 so $\TT H_C^\delta(\R,\Q,\nu)=1$
(see the second row of the table in example~\ref{simple computation}).
\end{exa}

\begin{exa}
Let $C$ be a closed rational curve with four leaves.
 We set $\Q=\Ve^\infty(C)$ and $\nu(q)=(3)$ for $q\in\Q$.
Then according to figure \ref{deg 3}, we have 
$\TT H_C^3(\Q,\nu)=1$.

Indeed, for the first morphism of Figure \ref{deg 3} we have the (classical) Hurwitz numbers
at the inner vertices equal to $\frac{1}{3}$ each. In the same time the group of local automorphism
at each vertex is the symmetric group 
$\mathfrak S_3$,
 so its order is 6. Finally the group of automorphisms
of the morphism itself is also 
$\mathfrak S_3$.
Using Definition \ref{def-multiplicity} we get 
$\frac{1}{6}(\frac63)^2=\frac23$ as the multiplicity.

For the second morphism we have local Hurwitz numbers at the inner vertices
equal to $\frac13$ again. There are no local or global automorphisms, but there is an inner edge of weight 3.
Thus we get $3(\frac13)^2=\frac13$ as the multiplicity.

\begin{figure}[h]
\centering
\begin{tabular}{ccc}
\includegraphics[width=3cm, angle=0]{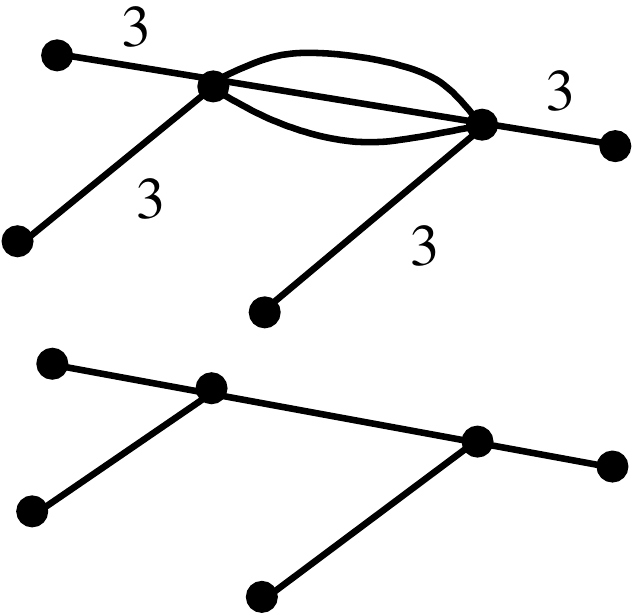}& \hspace{5ex} &
\includegraphics[width=3cm, angle=0]{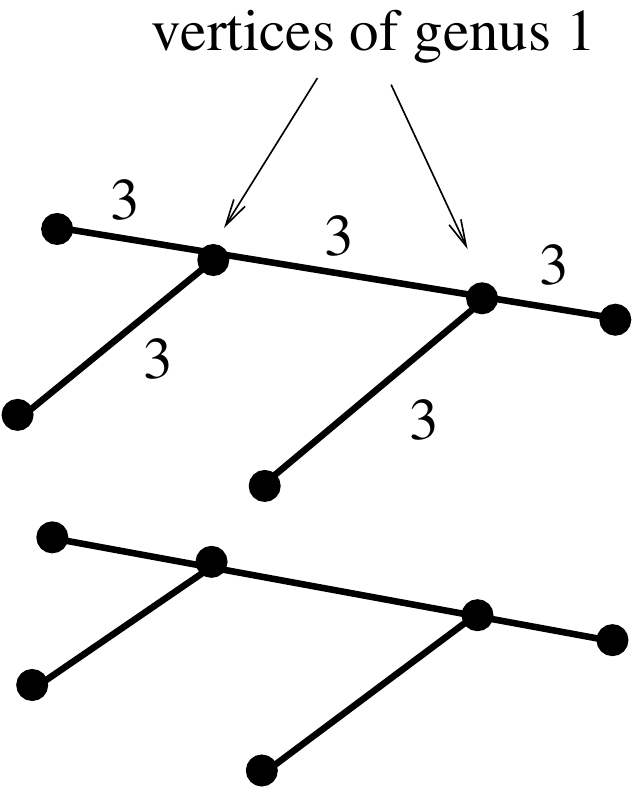}
\\
\\  $m(h)=\frac{2}{3}$  &&   $m(h)=\frac{1}{3}$ 
\end{tabular}
\caption{}
\label{deg 3}
\end{figure}

\end{exa}

\begin{exa}
Let $C$ be a genus 2 explicit tropical curve with two leaves and whose combinatorial type is as in figure \ref{gen 2}.
 We set $\Q=\Ve^\infty(C)$ and $\nu(q)=(2)$ for $q\in\Q$.
Then according to figure \ref{gen 2}, we have 
$\TT H_C^2(\Q,\nu)=8$.

\begin{figure}[h]
\centering
\begin{tabular}{ccc}
\includegraphics[width=4.5cm, angle=0]{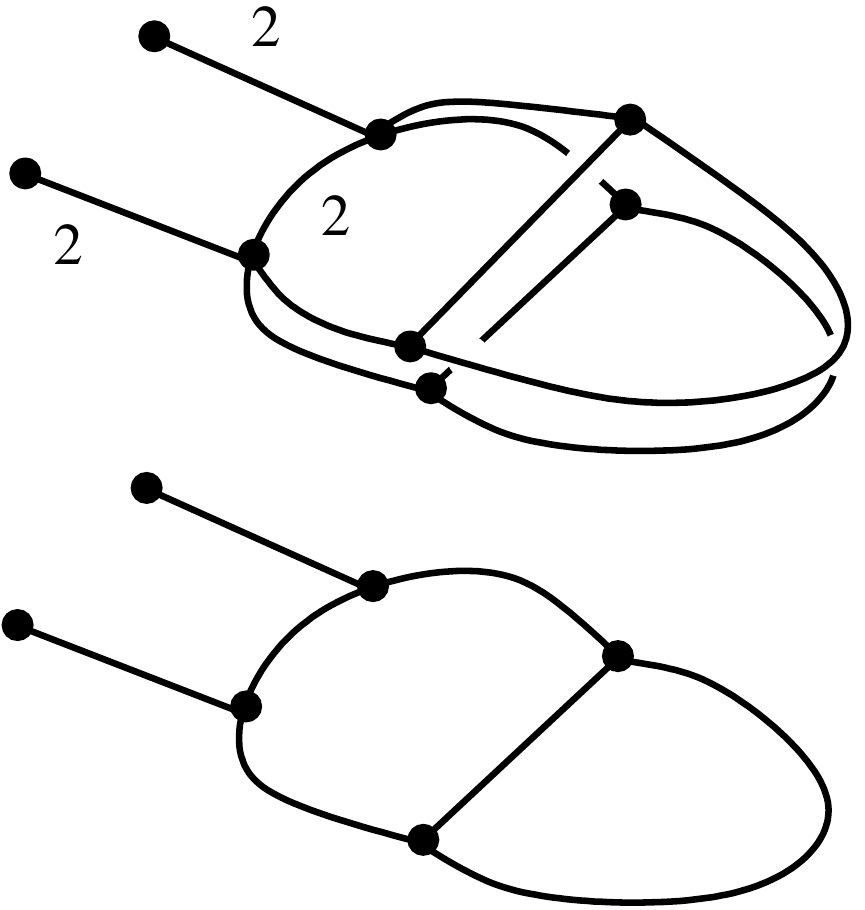}&
\includegraphics[width=4.5cm, angle=0]{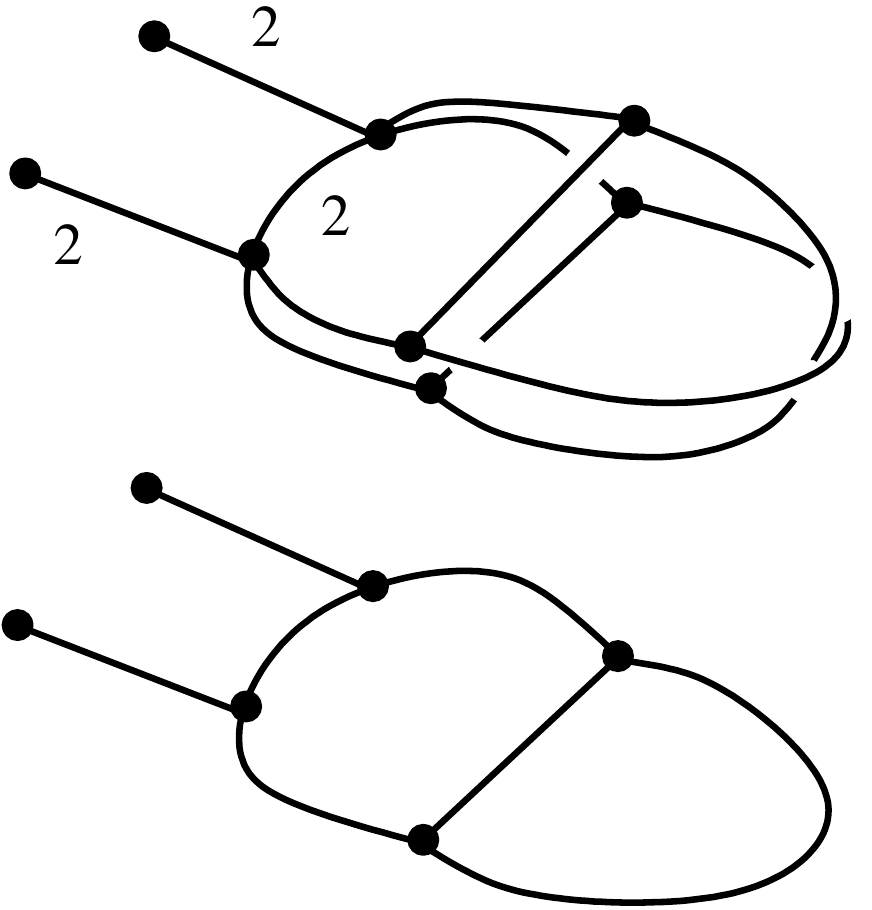}&
\includegraphics[width=4.5cm, angle=0]{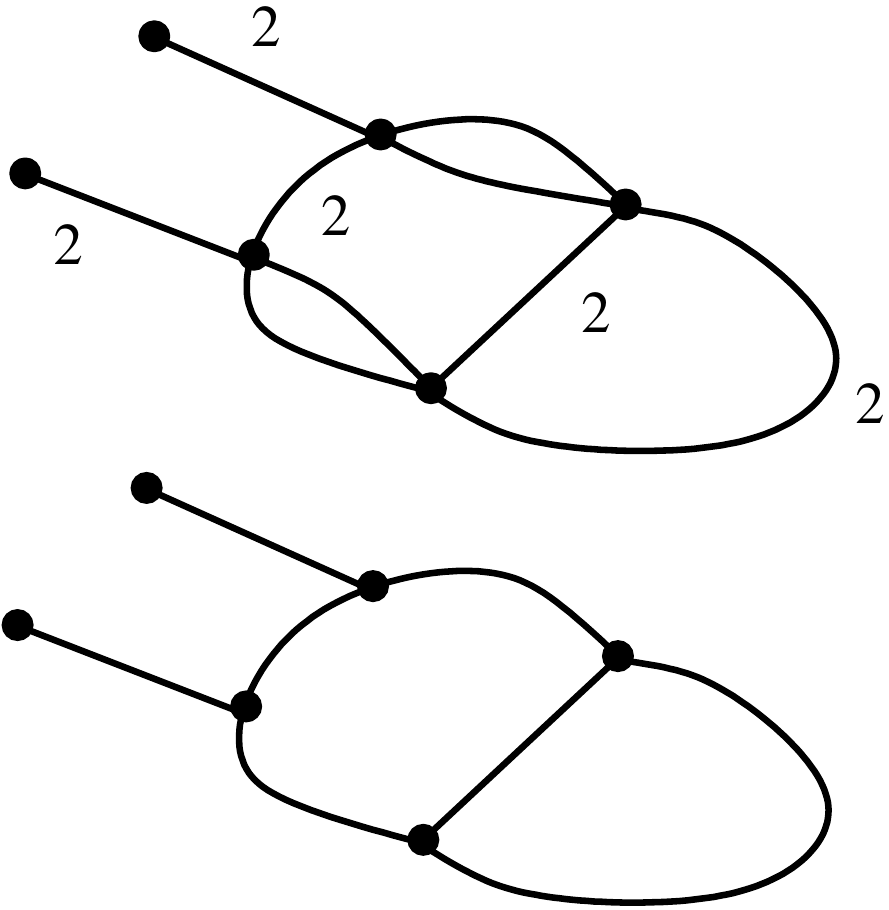}
\\
\\  $m(h)=1$  & $m(h)=1$  &  $m(h)=2$
\end{tabular}
\begin{tabular}{cc}
\includegraphics[width=4.5cm, angle=0]{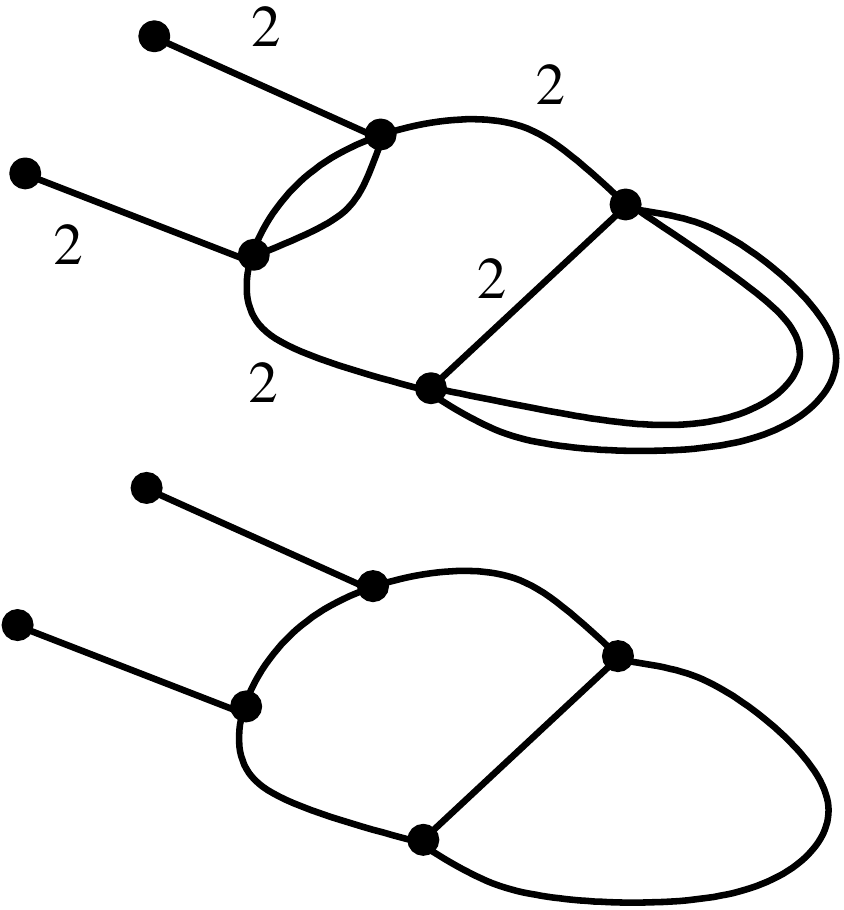}&
\includegraphics[width=4.5cm, angle=0]{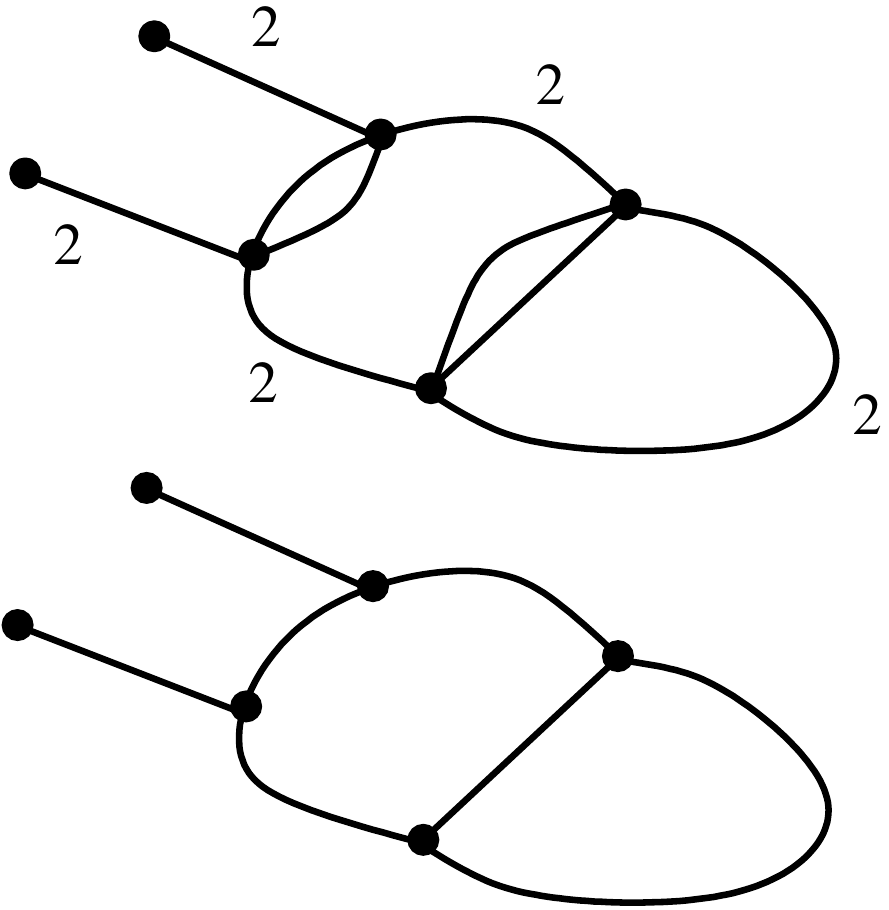}
\\
\\  $m(h)=2$ & $m(h)=2$
\end{tabular}
\caption{}
\label{gen 2}
\end{figure}

\end{exa}

\begin{exa}\label{L to P trop}
Suppose that $q\in\R$ is on an end with an adjacent leaf $q'$
 not in $\Q$, and that 
$\nu(q)=(\lambda_1,\ldots,\lambda_k)$ with $\lambda_i\ne 1$.
Define
 $\R'=\R\setminus
 \{q\}$,  $\Q'=\Q\cup \{q'\}$,
and  $\nu(q')=(1,\ldots,
 1,\lambda_1,\ldots,\lambda_k)$
where the number of 1 we add is equal to $\delta((qq'))$. Then 
 $H^\TT_\delta(C,\R,\Q,\nu)=H^\TT_{\delta'}(C,\R',\Q',\nu)$,
 where $\delta'$ is obtained from $\delta$ by increasing it by
 $\gamma(q)$ over $(qq')$.
This is the tropical counterpart of the identity described in
example~\ref{L to P}. 
Note that as in example \ref{L to P}, we have to assume that 
$\lambda_i\ne 1$ 
 for all $i$ in $\{1, \dotsc, k\}$ otherwise
new automorphisms of coverings might appear (e.g. 
Figures \ref{ex morphism}a and
b which are the tropical analogs of the morphisms corresponding to
 the first two rows of
example \ref{simple computation}). 
\end{exa}

\vspace{1ex}
Let us relate these tropical open Hurwitz numbers to the open Hurwitz
numbers we defined in section \ref{classical}.
Let $C$ be a tropical curve as in definition~\ref{def-multiplicity} with the data introduced at the beginning of this subsection.
Let $S$ be an oriented connected compact closed surface whose genus is
the genus of $C$.
We choose a collection $\L=\{L_q\}_{q\in\R}$ of disjoint
smoothly embedded circles in $S$ such that there is a natural
correspondence $C'\to S'_{C'}$ between the connected components of
$\oC$ and $\oS$ which preserves incidence relations and such that
$b_1(C')=g(C'_{S'})$. For each point $q\in\Q$, we choose a point
$p_q\in S'_{C'}$ where $C'$ is the connected component of $\oC$
containing $q$, such that $p_q\ne p_{q'}$ for $q\ne q'$ (see figure
\ref{dual graph} for an example). Finally we define
$\P=\cup_{q\in\Q}\{p_q\}$, $\delta(S'_{C'})=\delta(C')$,
$\mu(L_q)=\nu(q)$, and $\mu(p_q)=\nu(q)$.
 

\begin{figure}[h]
\centering
\begin{tabular}{c}
\includegraphics[width=10cm, angle=0]{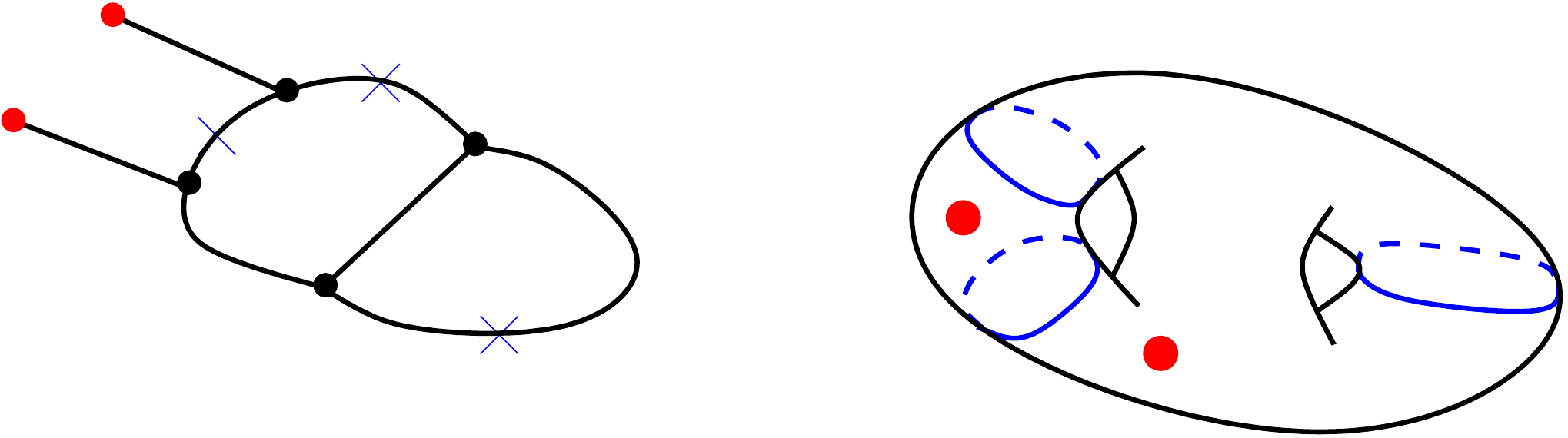}
\end{tabular}
\caption{A tropical curve $C$ is depicted on the left and the
  corresponding surface $S$ on the right. The two leaves of $C$ are
  elements of $\Q$ and correspond to elements of $\P$ (depicted by
  dots) on $S$ while crosses on $C$ represent points of $\R$ and
  correspond to the circles of $\L$ pictured on $S$.} 
\label{dual graph}
\end{figure}

\begin{thm}\label{tropH}
For any $\delta, \R, \Q$, and $\nu$, one has
$$\TT H_C^\delta(\R,\Q,\nu)=H_S^\delta(\L,\P,\mu)$$
\end{thm}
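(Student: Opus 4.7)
The strategy is to decompose both sides of the claimed equality along the combinatorial structure of $C$, reducing each side to local data attached to each vertex---namely a Hurwitz number on a sphere, which is precisely what Definition~\ref{def-multiplicity} assembles on the tropical side.

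First, I refine the decomposition of $S$ encoded by $\L$ to a pants-like decomposition dual to the full graph $C$. For each edge $e\in\Ed^0(C)$ lying in a connected component $C'$ of $\oC$, choose a smoothly embedded circle $L_e$ in $S'_{C'}$ so that $\L\cup\{L_e\}_{e\in\Ed^0(C)}$ cuts $S$ into genus $0$ pieces $\{S_v\}_{v\in\Ve^0(C)}$, each with distinguished boundary circles and marked points in bijection with the edges adjacent to $v$ (leaves of $C$ in $\Q$ contribute marked points from $\P$). This is possible because $C$ is explicit, so every vertex has genus $0$ and each $C'$ satisfies $b_1(C')=g(S'_{C'})$. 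The dual graph of this decomposition is exactly $C$.

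Given $f:S_1\to S$ in $\S$, the preimage $f^{-1}(\L\cup\{L_e\})$ cuts $S_1$ into connected pieces, each a ramified covering of some $S_v$ with branch locus in the marked points. I encode this combinatorial data as a graph $C_1$ whose vertices are the pieces and whose edges are preimage circles weighted by the degree of the corresponding circle covering, and I let $h(v)$ be the vertex of $C$ below. Balancing follows since the covering degree is locally constant, and the Riemann--Hurwitz inequality~(\ref{RH condition}) follows from the classical Riemann--Hurwitz formula applied to each local cover; hence $h\in\S^\TT$.

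Conversely, for a fixed $h\in\S^\TT$, any $f\in\S$ inducing $h$ is reconstructed from (a) a connected ramified covering of $S_{h(v)}\cong S^2$ at each vertex $v\in\Ve^0(C_1)$ of degree $d_{h,v}$ and with ramification profile at each marked point of $h(v)$ dictated by the weights of the edges of $C_1$ above that marked point, together with (b) a choice of gluing of the local covers along each internal circle. The weighted count of such local covers is $H_{S^2}^{d_{h,v}}(\P',\mu')$; but because the tropical morphism records which edge of $C_1$ sits above which edge of $C$, one must multiply by $\prod_i|Aut(\mu'(p'_i))|$ to label the preimage circles, and each internal edge $e$ of $C_1$ carries $w_{h,e}$ gluing choices (the deck transformations of a connected $w_{h,e}$-fold cover of $S^1$). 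Different reconstructions differing by a global symmetry yield equivalent coverings, accounted for by dividing by $|Aut(h)|$. Summing over combinatorial types, which by Lemma~\ref{unique} coincides with summing over $\S^\TT$, gives the identity.

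The main obstacle is the careful bookkeeping of automorphisms required to show that $\sum_{f\mapsto h}1/|Aut(f)|=m(h)$. This is a cut-and-paste argument for ramified coverings with boundary and is especially delicate when some parts of $\nu(q)$ coincide or when some weights $w_{h,e}$ equal $1$, where gluing circles can become ``invisible'' and create or destroy automorphisms, as illustrated in Examples~\ref{simple computation} and~\ref{L to P trop}. The verification reduces ultimately to an orbit--stabilizer computation in the groupoid of coverings with prescribed local data at each vertex and edge of the chosen decomposition.
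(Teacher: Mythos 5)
Your proposal follows essentially the same route as the paper: refine $\L$ to a decomposition of $S$ dual to $C$ with genus-zero pieces, cut a covering $f\in\S$ along the preimage circles to obtain the surjection $\S\to\S^\TT$, and reconstruct the fibre over a given $h$ from local sphere coverings counted by $H_{S^2}^{d_{h,v}}(\P',\mu')$ with the $|Aut(\mu'(p'_i))|$ labelling factors and $w_{h,e}$ gluing choices per internal edge, invoking Lemma~\ref{unique} at the end. The automorphism bookkeeping you flag as the main obstacle is precisely the step the paper handles by an edge-by-edge gluing induction (automorphisms fixing the two boundary circles either survive as automorphisms or identify distinct gluings), so your outline matches the paper's argument in both structure and level of detail.
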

\begin{proof}
The proof is quite straightforward: there exists a natural surjection
$\phi:\S \to\S^\TT$, and for any element $h:C_1\to C$ we have 
$$m(h)=\sum_{f\in\phi^{-1}(h)}\frac{1}{|Aut(f)|}$$

Note that according to examples \ref{L to P} and \ref{L to P trop}, 
we may suppose that
$\Ve^\infty(C)=\Q=\P=\emptyset$.

\vspace{2ex}
Let us  construct the map $\phi$. 
For this we choose a configuration $\R'$ of points lying in the edges of
$C$ such that each edge in $\Ed^0(C)$ 
contains exactly one point of
$\R\cup\R'$, and we define $\oC_0=C\setminus\left(\R\cup\R'\right)$. 
For each point $q$ in $\R'$, we choose a smoothly embedded circle $L_q$
in $S$ such that $L_q\cap L_{q'}=\emptyset$ if $q\ne q'\in\R\cup\R'$,
 such that there is a natural correspondence $C'\to S'_{C'}$  between
the connected components of $\oC_0$ and 
$\oS_0=S\setminus \left(\L\cup_{q\in\R'}L_q\right)$ which
preserves 
incidence relations, and such that all connected components of $\oS_0$
have genus 0.
Note that  $C$ is a realisation of the dual graph of the decomposition
 of $S$ induced by $\left(\L\cup_{q\in\R'}L_q\right)$, and that
a connected component of $\oS_0$
 cannot be a disk since
 $\Ve^\infty(C)=\emptyset$.

Let $f:S_1\to S$ be an element of $\S$, and let us denote
$\oS_1=f^{-1}(\oS_0)$. 
We construct a graph $C_1$
 in the following way

\begin{itemize}
\item to each connected component $S_1'$ of
  $\oS_1$  corresponds  
a vertex $v_{S_1'}$ in $\Ve^0(C_1)$;
we set $g_{v_{S_1'}}$ to be equal to the genus of $S_1'$;

\item to each circle $L$ in 
$f^{-1}\left(\bigcup_{q\in\R\cup\R'} L_q\right)\setminus\partial S_1$
  adjacent to the connected components $S_1'$ and $S_1''$ of $\oS_1$
 corresponds an edge $e_L$
in $\Ed^0(C_1)$ 
joining  $v_{S_1'}$ and
$v_{S_1''}$; 

\item to each connected component $L$ of $\partial S_1$ adjacent to
the connected component $S_1'$ of $\oS_1$ 
 correspond a boundary point
  $v_L$ in $\partial C_1$ and an edge $e_L$ joining $v_L$ to $v_{S_1'}$.

\end{itemize}
For each circle  $L$ in 
$f^{-1}\left(\bigcup_{q\in\R\cup\R'} L_q\right)$, we denote by
$w(e_L)$  the degree of the unramified  
covering $f_{|L}:L\to f(L)$.
There exists a unique tropical
structure on $C_1$ and a unique tropical morphism $h:C_1\to C$ such
that 
$$h(v_{S_1'})=v \Leftrightarrow f(S_1')=S'_v,
\quad h(e_L)\subset e \Leftrightarrow f(L)=L_q \text{ where }
q=(\R\cup\R')\cap e,$$
and
$$
w_{h,e_L}=w(e_L)\quad \forall e_L\in\Ed(C_1)$$
 We define
$\phi(f)=h$.

\vspace{2ex}
Now it remains us to prove that
$m(h)=\sum_{f\in\phi^{-1}(h)}\frac{1}{|Aut(f)|}$ for any $h:C_1 \to C$ in $\S^\TT$.
For this, we reconstruct elements in $\phi^{-1}(h)$ step by step,
taking care of automorphisms.

The tropical curve $C_1$ induces a structure of reducible tropical
curve on
 the topological closure $\oC_1$ of $C_1\setminus
 h^{-1}(\R\cup\R')$. Note that to any vertex $v$ in $\Ve^0(C_1)$
 corresponds a connected component of $\oC_1$. The tropical morphism
 $h:C_1\to C$ induces a reducible tropical morphism $\oh:\oC_1\to C$.
 For each
 vertex $v$ in $\Ve^0(C_1)$ adjacent to the edges 
$e_1,\ldots,e_{k_v}$, we choose an unramified covering 
$f_{v}:S'_{1,v} \to S'_{f(v)}$ of degree $d_{h,v}$ where $S'_{1,v}$ is 
a surface with boundary components $L_1,\ldots,L_{k_v}$ of genus $g_v$ , and
$f_{v|L_i}$ is a degree $w_{f,e_i}$ unramified covering from $L_i$ to
$L_q$ where $q=(\R\cup\R')\cap e_i$. Let us set
$\oS_1=\cup_{v\in \Ve^0(C_1)}S'_{1,v}$, and let $\of:\oS_1 \to S$ be the map
defined by $\of_{|S'_{1,v}}=f_v$ for any $v$ in $\Ve^0(C_1)$.
The
union of all maps $\of$ constructed in this way
(up to equivalence) is a finite set $A_0$, and we clearly have
(in the notation of Definition \ref{def-multiplicity})
$$\sum_{\of\in A_0}\frac{1}{|Aut(\of)|}=\frac{1}{|Aut(\oh)|}
\prod_{v\in\mathcal\Ve^0(C_1)}\  \left(\prod_{i=1}^{k_v} |Aut(\mu'(p'_i)|\right) \ 
H_{S^2}^{d_{h,v}}(\P',\mu').$$ 

To get an element $f:S_1\to S$ of
$\S$, it 
 remains to glue all the 
 coverings $f_v$  according to the edges in $\Ed^0(C_1)$. 
Suppose we already have performed these gluings according to $s$ edges
in $\Ed^0(C_1)$ and obtained  a set $A_s$ of  coverings
of $S$,  and that we want now to glue elements of $A_s$ along the edge
$e$ in $\Ed^0(C_1)$.
Since we can identify in exactly $d$ different ways two degree $d$
unramified coverings of the circle by the circle, we have $w_{h,e}$
different ways to perform
 this gluing given an element $\widetilde f:\widetilde S_1\to S$ of
 $A_s$. 
We denote by 
$\widetilde f_1,\ldots,\widetilde f_{w_{h,e}}$ the coverings of $S$
constructed in this way.
Let $i=1,\ldots, {w_{h,e}}$. 
Any automorphism $\phi\in Aut(\widetilde f)$   fixing globally the
two boundary components of $\widetilde S_1$ corresponding to $e$
extends to an equivalence of coverings $\phi_i: \widetilde f_i\to
\widetilde f_j$ for some $j$. The homeomorphism $\phi_i$ is in $Aut(\widetilde
f_i)$ if $i=j$, and identify the two coverings $\widetilde f_i$ et
$\widetilde f_j$ otherwise. 
Hence at the end of this gluing procedure,
we obtain a set $A_{|\Ed^0(C_1)|}$ and we have
$$\sum_{f\in A_{|\Ed^0(C_1)|}}\frac{1}{|Aut(f)|}=m(h).$$ 
According to Lemma \ref{unique}, the set $A_{|\Ed^0(C_1)|}$
 is exactly $\phi^{-1}(h)$ so the Theorem is proved.
\end{proof}

\vspace{2ex}
We can allow points with ramification profile $\nu(q)=(2,1,\ldots, 1)$
in $C\setminus \Ve^\infty(C)$, and recover in this way  results from \cite{CJM}.
Suppose that $C$ is trivalent, and that there exists $q\in\Q$
with $\nu(q)=(2,1,\ldots, 1)$. Let $e$ be the end of $C$ adjacent to
$q$, and $v$ be the other vertex  adjacent to $e$.
We denote by $e_1,\ldots,e_k$ the edges of $C_1$ in $h^{-1}(e)$, and by
$\{v_1,\ldots,v_l\}=h^{-1}(v)$. By assumption on $\nu(q)$,  one of
the edges $e_j$, say $e_1$,  has weight 2 while the other edges $e_j$
have weight 1. Suppose that $e_1$ is adjacent to $v_1$. 
The
 Riemann-Hurwitz condition implies that 
all vertices $v_j$ are $(d_{h,v_j}+2)$-valent vertices of $C_1$, so the
vertex $v_1$ (resp. $v_j$, $j\ge 2$) has exactly 3 (resp. 2) 
adjacent edges not mapped to $e$.
Now according to Examples \ref{simplify}, \ref{elem1}, and \ref{elem2}
we have 
$$\prod_{j=1}^l \left(\prod_{i=1}^{3} |Aut(\mu'(p'_i)|\right) \ 
\TT H_C^{d_{h,v_j}}(\P',\mu')=(d_{h,v_1}-2)!\prod_{j=2}^l\frac{d_{h,v_j}!}{d_{h,v_j}}$$ 
The contraction $\pi:C\to C_0$  of $e$
 induces a contraction  $\pi_1:C_1\to C_2$ and a
tropical morphism $h':C_2\to C_0$ such that $\pi\circ h=h'\circ \pi_1$.
Moreover, all points in $h'^{-1}(\pi(q))$ 
lie in the interior of
edges of $C_2$, except one point which is a trivalent vertex of $C_2$ 
(see figure \ref{contraction}). 
We also have 
$$|Aut(h)|= |Aut(h')| (d_{h,v_1}-2)!\prod_{j=2}^ld_{h,v_j}!$$

So if $C$ is rational, $\L=\emptyset$, and all points  $p\in\P$ satisfy
$\nu(q)=(2,1,\ldots, 1)$ except maybe two of them, by contracting all
the points $q\in\Q$ with $\nu(q)=(2,1,\ldots, 1)$ we 
find ourselves in 
the
situation discussed  in \cite{CJM}. In particular, we recover the same
multiplicities in all cases except for example \ref{elem1} (in this
special case, the tropical computation from \cite{CJM} is wrong).

\begin{exa}
The result of this sequence of contractions in the case of the tropical
morphism depicted in figure
 \ref{ex morphism}~a) is depicted in figure
\ref{contraction}. 
\begin{figure}[h]
\centering
\begin{tabular}{c}
\includegraphics[width=4cm, angle=0]{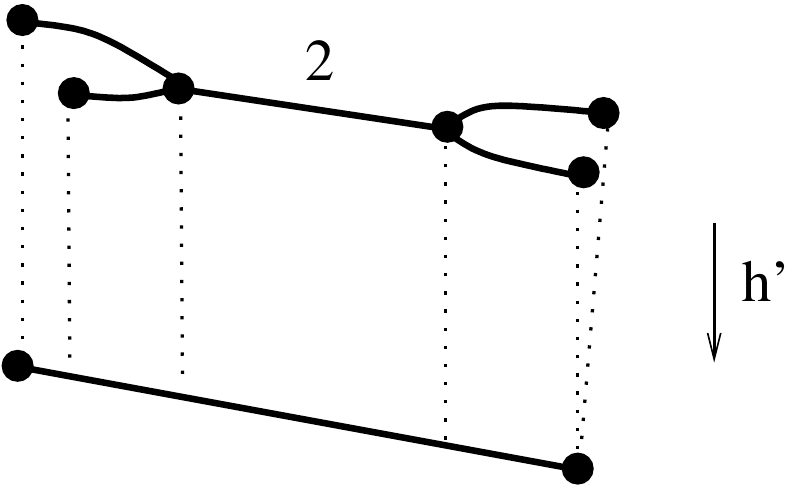}
\end{tabular}
\caption{}
\label{contraction}
\end{figure}
\end{exa}

\small
\def\rightmark{\em Bibliography}
\addcontentsline{toc}{section}{References}

\bibliographystyle{alpha}
\bibliography{../../Biblio.bib}

\end{document}